\documentclass[12pt]{article}
\usepackage[T1]{fontenc}
\usepackage[utf8]{inputenc}
\usepackage{amsmath,amssymb,amsthm}
\usepackage{mathptmx}
\usepackage[margin=1in]{geometry}
\usepackage{microtype,needspace}
\usepackage{tikz}
\usepackage[bottom]{footmisc}
\usepackage[numbers,sort&compress]{natbib}
\usepackage{xurl}
\usepackage[hidelinks]{hyperref}
\hypersetup{
 pdftitle={Singular Loci of SL2(C)-Representation Varieties of Torus Knot Groups},
 pdfauthor={S. Liriano},
 pdfsubject={Version 8: representation and character singularities, 4 November 2026}
}
\newtheorem{theorem}{Theorem}
\newtheorem{lemma}[theorem]{Lemma}
\newtheorem{proposition}[theorem]{Proposition}
\newtheorem{corollary}[theorem]{Corollary}
\newtheorem{sublemma}{Sublemma}[theorem]
\theoremstyle{remark}
\newtheorem*{remark}{Remark}
\newcommand{\C}{\mathbb C}
\newcommand{\Z}{\mathbb Z}
\newcommand{\RR}{\mathbb R}
\newcommand{\SL}{\mathrm{SL}_2(\C)}
\newcommand{\Sing}{\operatorname{Sing}}
\newcommand{\Hom}{\operatorname{Hom}}
\newcommand{\tr}{\operatorname{tr}}
\newcommand{\diag}{\operatorname{diag}}
\newcommand{\cI}{\mathcal I}
\newcommand{\Cc}{\mathcal C}
\newcommand{\fnsep}{\nobreak\hspace{0.3em}}
\title{\bfseries Singular Loci of $\mathrm{SL}_2(\mathbb C)$-Representation\\
Varieties of Torus Knot Groups}
\author{S. Liriano}
\date{}

\begin{document}
\maketitle
\vspace{-1.5em}

\begin{abstract}
Let $T(q,p)=\langle x,y\mid x^p=y^q\rangle$, where $p,q>1$ are relatively prime, and let
\[
R=\Hom(T(q,p),\mathrm{SL}_2(\mathbb C)).
\]
We compute the integral homology of the singular locus of $R$. We show that this locus has complex dimension $2$ and is a disjoint union of $\mu=(p-1)(q-1)$ smooth affine surfaces, each homotopy equivalent to the two-dimensional sphere. Consequently,
\[
H_0(\Sing R;\Z)\cong H_2(\Sing R;\Z)\cong\Z^\mu,
\]
and all remaining homology groups vanish. Thus both nonzero Betti numbers equal the first Betti number of the Milnor fiber of the plane curve singularity $z^p+w^q=0$, namely its Milnor number $\mu$.
\end{abstract}

\smallskip
\noindent\textbf{Research development and use of tools.}
This work grew out of several years of investigation and discussion concerning singularities of representation varieties in fg groups. Exploratory computations and code development using SageMath, including its GAP capabilities, Singular, and Macaulay2 were undertaken predominantly over finite fields to develop examples and geometric intuition. The local computing environment included a laptop running Linux. These investigations helped motivate the subsequent focus on the singular locus over the complex numbers. OpenAI assisted with mathematical discussions, the development and examination of proposed arguments, computational code, and the drafting and repeated revision of theorem statements, and exposition. It also assisted with preparation of the \LaTeX{} source and rendering of the manuscript. Throughout this process, the statements and proofs underwent extensive, repeated review and revision. The author takes full responsibility for the mathematical content.
\smallskip

The paper is organized as follows. Section~\ref{sec:components} describes the irreducible components of $R$ and their intersections. Section~\ref{sec:singular} identifies the singular locus and determines its profile, homotopy type, and integral homology. Section~\ref{sec:milnor} relates the resulting Betti numbers to the Milnor fiber and discusses the trefoil knot example.

\smallskip
\noindent\textbf{Keywords:} torus knot group; representation variety; singular locus; integral homology; profile function; Milnor fiber.

\section{Introduction}

Let $p,q>1$ be relatively prime integers and put
\[
T(q,p)=\langle x,y\mid x^p=y^q\rangle,\qquad
R=\Hom(T(q,p),\SL).
\]
Evaluation on the generators identifies $R$ with the affine variety
$\{(A,B)\in\SL^2:A^p=B^q\}$. Throughout, varieties and singular loci carry their reduced structures.\fnsep\footnote{Thus their coordinate rings contain no nonzero nilpotent elements. We study the reduced affine variety underlying the representation scheme.} Dimension means complex dimension.\fnsep\footnote{For a complex affine variety $X$, its complex analytic dimension equals the Krull dimension of its coordinate ring $\C[X]$. See Serre \cite[\S2, no.~6, Corollaries 2--3, p.~11]{Serre1956}.} Homotopy types and homology refer to the usual topology on complex points, and homology has integral coefficients. We use $\cong$ for isomorphisms of varieties or groups, as appropriate, and $\simeq$ for homotopy equivalence. We write
\[
g=\frac{(p-1)(q-1)}2,\qquad \mu=(p-1)(q-1)=2g.
\]

Liriano showed that $R$ has dimension $4$ and that its number of four-dimensional irreducible components equals the genus $g$ of the corresponding torus knot.\fnsep\footnote{See \cite{Liriano1998,Liriano1999}; the genus equality is a consequence of the component formula in \cite[Theorem A]{Liriano1999}.}
Mart\'in-Morales and Oller-Marc\'en subsequently completed the component count for the family $\langle x,y\mid x^m=y^n\rangle$, including the components of lower dimension.\fnsep\footnote{Both \cite{Liriano1999} and \cite{MartinOller2010} allow exponents that are not relatively prime. In the coprime case there is one three-dimensional component and $g$ four-dimensional components.}
Later, Mira-Alban\'es, Rodr\'iguez-Nieto, and Salazar-D\'iaz gave explicit matrix descriptions of abelian and nonabelian $\mathrm{SL}_2(\C)$-representations of torus knot groups, including criteria for the irreducibility of individual representations.\fnsep\footnote{See \cite[Proposition 4 and Theorems 8--9]{Mira2023}.}

For the character variety, Mu\~noz described the abelian component and the $g$ additional components, each an affine line meeting the abelian component at two distinct points.\fnsep\footnote{See \cite[Proposition 2.3 and Theorem 3.1]{Munoz2009}. These results concern the character variety, the affine quotient of $R$ by simultaneous conjugation.}
Heusener, Porti, and Su\'arez Peir\'o established a general local description of the representation variety at diagonal representations associated with simple roots of the Alexander polynomial.\fnsep\footnote{For knot exteriors, \cite[Corollary 1.3 and the following discussion]{HeusenerPortiSuarez2001} identifies a smooth two-dimensional conjugacy orbit in the intersection of the abelian and nonabelian components. The roots of the Alexander polynomial of a torus knot are simple.}
Here we give a direct global account of the singular locus for torus knots and deduce its homotopy type and integral homology. 

The profile function records the dimensions and numbers of irreducible components.\fnsep\footnote{See Liriano and Majewicz \cite{LirianoMajewicz2011}. The profile counts maximal irreducible components of the variety.}
For a variety $X$ of dimension $d$, write $P_d(X)=(N_d(X),N_{d-1}(X),\ldots,N_0(X))$, where $N_j(X)$ is the number of irreducible components of $X$ of dimension $j$.
Thus, the preceding component count gives
\begin{equation}\label{eq:R-profile}
P_4(R)=(g,1,0,0,0).
\end{equation}
In the sequel we determine the corresponding profile and integral homology of the singular locus.

The geometry needed for this calculation is elementary. The abelian component is isomorphic to $\SL$, and each four-dimensional component is isomorphic to $Q\times Q$.\fnsep\footnotemark{} Here
\begin{equation}\label{eq:Q}
Q=\{U\in\mathfrak{sl}_2(\C):U^2=I\}\cong\SL/D.
\end{equation}
\footnotetext{The Lie algebra $\mathfrak{sl}_2(\C)$ consists of the $2\times2$ complex matrices of trace zero, with bracket $[U,V]=UV-VU$. Every element has the form $U=\left(\begin{smallmatrix}a&b\\c&-a\end{smallmatrix}\right)$, $a,b,c\in\C$. Since $U^2=(a^2+bc)I$, the condition $U^2=I$ identifies $Q$ with the affine quadric $\{(a,b,c)\in\C^3:a^2+bc=1\}$. Equivalently, $Q$ is the conjugacy class of $\diag(1,-1)$ under $\SL$. Its stabilizer is the diagonal torus $D=\{\diag(t,t^{-1}):t\in\C^\times\}$, giving $Q\cong\SL/D$. For a complex algebraic group $M$ and a closed algebraic subgroup $H$, one has $\dim(M/H)=\dim M-\dim H$. Here $\dim\SL=3$ and $\dim D=1$, so $\dim Q=2$.}
Distinct four-dimensional components are disjoint, and each meets the abelian component in two disjoint copies of $Q$. All components of $R$ are smooth. The singular locus therefore consists exactly of their intersections, and
\[
P_2(\Sing R)=(2g,0,0).
\]
We prove that $Q$ is diffeomorphic to the tangent bundle of $S^2$. It follows that $\Sing R$ has the homotopy type of $2g$ disjoint two-spheres. Although its points are singular in $R$, the variety $\Sing R$ itself is smooth.

The singularities studied here occur in the full representation variety, before taking the quotient by simultaneous conjugation. Each connected component of $\Sing R$ is a closed conjugacy orbit isomorphic to $Q\cong\SL/D$, and the character quotient maps each such surface to an ordinary double point.\fnsep\footnote{The orbit description follows from Lemma~\ref{lem:intersections}; for the nodal intersections in the character variety, see Mu\~noz \cite[Theorem 3.1]{Munoz2009}.} Our computation therefore describes the geometry and topology of the singular locus before these conjugacy orbits are collapsed. In general, passage to a character quotient can produce singularities even when the full representation variety is smooth, as can occur for $\mathrm{SL}_2(\C)$-representations of free groups.\fnsep\footnote{For the free group $F_r$ of rank $r$, the representation variety $\Hom(F_r,\SL)\cong(\SL)^r$ is smooth. For $r\ge3$, its character variety $\Hom(F_r,\SL)/\!/\SL$ has a nonempty singular locus consisting exactly of reducible characters; see Florentino and Lawton \cite[Theorem 1.1]{FlorentinoLawton2012}. }

The same integer $\mu=2g$ occurs in the topology of the Milnor fiber $F$ of $z^p+w^q$ at the origin: $F$ has the homotopy type of a wedge of $\mu$ circles.\fnsep\footnote{See Milnor \cite[Theorem 6.5]{Milnor1968}.} We obtain
$b_0(\Sing R)=b_2(\Sing R)=b_1(F)=\mu$.
This equality compares the Betti numbers of the two spaces; its proof uses the explicit geometry of $\Sing R$ in addition to the classical description of $F$.

\section{Components and their intersections}\label{sec:components}

Write $M=\SL$. For $\alpha\in\C^\times\setminus\{1,-1\}$, put $\Cc_\alpha=\{A\in M:\tr A=\alpha+\alpha^{-1}\}$.
Denote the set of commuting pairs in $R$ by
\[
R_0=\{(A,B)\in R:AB=BA\}.
\]
Let $\cI$ consist of the integer pairs $(k,\ell)$ satisfying
\[
1\le k\le p-1,\qquad 1\le\ell\le q-1,
\]
where $k$ and $\ell$ are either both even or both odd. For each $(k,\ell)\in\cI$, set
\[
\alpha_k=e^{\pi i k/p},\qquad
\beta_\ell=e^{\pi i\ell/q},
\]
and put
\[
R_{k,\ell}=\Cc_{\alpha_k}\times\Cc_{\beta_\ell}.
\]
The parity condition ensures that every $(A,B)\in R_{k,\ell}$ satisfies
\[
A^p=B^q=(-1)^kI,
\]
so $R_{k,\ell}\subseteq R$.

\begin{lemma}[Geometry of the components]\label{lem:components}
The irreducible components of $R$ are exactly $R_0$ and the $R_{k,\ell}$ for $(k,\ell)\in\cI$. They satisfy $R_0\cong\SL$, $R_{k,\ell}\cong Q\times Q$, and $|\cI|=g$.
The surface $Q$ and all these components are smooth. The component $R_0$ has dimension three, each $R_{k,\ell}$ has dimension four, and distinct $R_{k,\ell}$ are disjoint.
\end{lemma}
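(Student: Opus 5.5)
The plan is to decompose $R$ by the central element $Z=A^p=B^q$. The main tool is the observation that a matrix $C\in M$ commutes with a non-central matrix $N$ only if $C$ lies in the centralizer of $N$, and that centralizer is abelian.

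First I will show that $R=R_0\cup\bigcup_{(k,\ell)\in\cI}R_{k,\ell}$. Take $(A,B)\in R$ and put $Z=A^p=B^q$; then $Z$ commutes with both $A$ and $B$.

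If $Z\neq\pm I$, then $A$ and $B$ both lie in the centralizer of $Z$, which is abelian. Hence $(A,B)\in R_0$.

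If $Z=\pm I$ and $A$ is not central, then $A$ is non-parabolic or parabolic. A parabolic $A=\pm(I+N)$ with $N\ne0$ nilpotent satisfies $A^p=(\pm1)^p(I+pN)\ne\pm I$, which is impossible. So $A$ is diagonalizable with eigenvalues $\alpha^{\pm1}$, where $\alpha^{2p}=1$ and $\alpha\ne\pm1$. Thus $\alpha=\alpha_k^{\pm1}$ with $1\le k\le p-1$, and $A^p=(-1)^kI$. The same argument applies to $B$, giving $\beta_\ell$ with $(-1)^\ell=(-1)^k$, which is the parity condition. Since $\Cc_\alpha$ is the full conjugacy class of the diagonal matrix $\diag(\alpha,\alpha^{-1})$, we get $(A,B)\in R_{k,\ell}$.

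If $A$ or $B$ is central, then $A$ and $B$ commute, so $(A,B)\in R_0$.

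Next I will identify the pieces. The map $M\to R_0$, $C\mapsto(C^q,C^p)$, is an isomorphism; this is the step I expect to be the main obstacle. I will first show that $A$ and $B$ lie in a common maximal abelian subgroup: a diagonal torus, or $\{\pm\}\times$unipotent. Choose integers with $ap+bq=1$ and set $C=A^bB^a$. Then $C^q=A^{bq}B^{aq}$, and using $B^q=A^p$ this equals $A^{bq}A^{pa}=A$. Similarly $C^p=B$. So the inverse map is $(A,B)\mapsto A^bB^a$, which is regular. It follows that $R_0\cong\SL$, which is smooth, irreducible, and of dimension three.

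For $R_{k,\ell}$, the map $U\mapsto$ the conjugate of $\diag(\alpha,\alpha^{-1})$ is affine linear on traceless matrices. Concretely, every $A\in\Cc_\alpha$ has the form
\[
A=\tfrac{\alpha+\alpha^{-1}}2I+\tfrac{\alpha-\alpha^{-1}}2U,\qquad U\in Q,
\]
and this gives an isomorphism $Q\cong\Cc_\alpha$ because $\alpha\ne\alpha^{-1}$. Hence $R_{k,\ell}\cong Q\times Q$. The quadric $a^2+bc=1$ has nowhere-vanishing gradient $(2a,c,b)$ on it, so $Q$ is smooth. It is irreducible, since $a^2+bc-1$ is irreducible (or since $Q$ is an orbit of a connected group), and it has dimension two. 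So $R_{k,\ell}$ is smooth, irreducible, and four-dimensional.

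The sets $R_{k,\ell}$ are pairwise disjoint, because the traces $\alpha_k+\alpha_k^{-1}=2\cos(\pi k/p)$ are distinct for distinct $k$ in $1,\dots,p-1$, and likewise for $\ell$.

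To see that these are exactly the components, note that each piece is irreducible and closed. None of the $R_{k,\ell}$ is contained in another, by disjointness, and none is contained in $R_0$, since a generic pair in $Q\times Q$ does not commute. Also $R_0\not\subseteq R_{k,\ell}$, since $R_0$ contains pairs with $A^p\ne\pm I$. So the decomposition is irredundant, and these are the irreducible components.

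Finally, for the count, the pairs $(k,\ell)$ with both entries even or both odd make up half of the $(p-1)(q-1)$ total pairs. One of $p,q$ is odd, say $p$. Then $k\mapsto p-k$ swaps the parity of $k$ and pairs the $(k,\ell)$ with matching parity bijectively with those of opposite parity. Hence $|\cI|=(p-1)(q-1)/2=g$.
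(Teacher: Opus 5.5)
Your proposal is correct and follows essentially the same route as the paper: you split according to whether $Z=A^p=B^q$ is central, invert $C\mapsto(C^q,C^p)$ by a B\'ezout product to get $R_0\cong\SL$, identify $\Cc_\alpha\cong Q$ by the affine normalization, separate the $R_{k,\ell}$ by traces, and count $\cI$ by parity. The differences are cosmetic: you rule out parabolic $A$ via $(I+N)^p=I+pN$ rather than via simple roots of $X^p\mp1$, and you use pairs with $A^p\ne\pm I$ rather than $(I,I)$ to show $R_0\not\subseteq R_{k,\ell}$. Also, the announced ``common maximal abelian subgroup'' step is never needed, since your check that $C^q=A$ and $C^p=B$ uses only $AB=BA$.
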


\begin{proof}
We use two sublemmas.

\begin{sublemma}[Geometry of the proposed components]\label{sublem:geometry}
The surface $Q$ is smooth and irreducible. Moreover, $R_0\cong M$ and $R_{k,\ell}\cong Q\times Q$. Thus $R_0$ and every $R_{k,\ell}$ are smooth and irreducible, of dimensions three and four, respectively.
\end{sublemma}

\begin{proof}[Proof of Sublemma~\ref{sublem:geometry}]
The equation $a^2+bc=1$ identifies $Q$ with a smooth affine surface: its gradient $(2a,c,b)$ never vanishes on the surface. Every $U\in Q$ has eigenvalues $1,-1$, and hence is conjugate to $\diag(1,-1)$. Conjugation can be achieved in $M$, since a scalar multiple of a diagonalizing matrix can be chosen to have determinant one. The orbit map from the irreducible variety $M$ onto $Q$ shows that $Q$ is irreducible.

For $\alpha\ne\pm1$, set
\[
c_\alpha=\frac{\alpha+\alpha^{-1}}2,\qquad
d_\alpha=\frac{\alpha-\alpha^{-1}}2.
\]
Since $d_\alpha\ne0$, the mutually inverse maps
\begin{equation}\label{eq:normalization}
Q\longrightarrow\Cc_\alpha,\quad U\longmapsto c_\alpha I+d_\alpha U,
\qquad
A\longmapsto\frac{A-c_\alpha I}{d_\alpha}
\end{equation}
are algebraic isomorphisms. Indeed, the eigenvalues of the image of $U$ are $\alpha,\alpha^{-1}$, and the converse follows from the characteristic polynomial of $A$. Thus every $R_{k,\ell}$ is smooth and irreducible of dimension four.

\Needspace{9\baselineskip}
Choose integers $u,v$ with $uq+vp=1$. The maps
\[
M\longrightarrow R_0,\quad C\longmapsto(C^q,C^p),
\qquad
R_0\longrightarrow M,\quad (A,B)\longmapsto A^uB^v
\]
are inverse morphisms.\fnsep\footnote{Denote the first map by $\phi:M\to R_0$, $\phi(C)=(C^q,C^p)$, and the second by $\psi:R_0\to M$, $\psi(A,B)=A^uB^v$. They are inverse because $\psi(\phi(C))=C$ for every $C\in M$, and $\phi(\psi(A,B))=(A,B)$ for every $(A,B)\in R_0$. These identities follow from $uq+vp=1$, together with $AB=BA$ and $A^p=B^q$. Both maps are morphisms because matrix multiplication and inversion are regular on $M$.} Commutativity and $A^p=B^q$ give
$(A^uB^v)^q=A^{uq+vp}=A$ and $(A^uB^v)^p=B^{uq+vp}=B$.
Negative powers are regular on $M$. Consequently, $R_0\cong M$ is smooth and irreducible of dimension three; it is closed in $R$ because commutativity is a matrix equation.
\end{proof}

\begin{sublemma}[Completeness and counting of the components]\label{sublem:counting}
The varieties $R_0$ and $R_{k,\ell}$, $(k,\ell)\in\cI$, are precisely the irreducible components of $R$. Distinct $R_{k,\ell}$ are disjoint, and $|\cI|=g$.
\end{sublemma}

\begin{proof}[Proof of Sublemma~\ref{sublem:counting}]
For an arbitrary $(A,B)\in R$, put $Z=A^p=B^q$. Both $A$ and $B$ commute with $Z$. If $Z$ is noncentral, its centralizer in $M$ is abelian. To see this, put $Z$ in Jordan form. With distinct eigenvalues its centralizer is a diagonal torus; otherwise the centralizer is conjugate to
\[
\left\{\begin{pmatrix}s&t\\0&s\end{pmatrix}:s\in\{1,-1\},\ t\in\C\right\},
\]
which is also abelian. Hence $AB=BA$ in this case.

If $Z=\varepsilon I$ with $\varepsilon\in\{1,-1\}$, the minimal polynomials of $A$ and $B$ divide $X^p-\varepsilon$ and $X^q-\varepsilon$. These polynomials have simple roots, so $A$ and $B$ are diagonalizable. If either is scalar, the pair lies in $R_0$. Otherwise the eigenvalue pairs are uniquely represented by $\{\alpha_k,\alpha_k^{-1}\}$ and $\{\beta_\ell,\beta_\ell^{-1}\}$ in the indicated ranges. Equality of the powers is exactly the parity condition. We have proved
\begin{equation}\label{eq:components}
R=R_0\cup\bigcup_{(k,\ell)\in\cI}R_{k,\ell}.
\end{equation}

The trace values $2\cos(\pi k/p)$ are pairwise distinct in the stated range, as are the values $2\cos(\pi\ell/q)$. Thus the products are pairwise disjoint closed subvarieties. None is contained in $R_0$, by dimension, and $R_0$ is contained in none of them because $(I,I)$ belongs to $R_0$ and to no product. The finite closed irreducible cover \eqref{eq:components} is therefore the irreducible-component decomposition.

At least one of $p,q$ is odd. If $p$ is odd, exactly $(p-1)/2$ indices $k$ have each parity, so for each $\ell$ that many $k$ are allowed. Thus $|\cI|=(p-1)(q-1)/2=g$. The case where $q$ is odd is symmetric.
\end{proof}

The two sublemmas prove Lemma~\ref{lem:components}.
\end{proof}

\begin{lemma}[Intersections with the abelian component]\label{lem:intersections}
Under the isomorphism $R_{k,\ell}\cong Q\times Q$ in \eqref{eq:normalization}, the reduced intersection with $R_0$ is
\[
R_0\cap R_{k,\ell}\cong\Delta_+\sqcup\Delta_-,
\qquad
\Delta_\pm=\{(U,\pm U):U\in Q\}.
\]
Each $\Delta_\pm$ is closed and isomorphic to $Q$.
\end{lemma}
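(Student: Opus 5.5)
Under the identification \eqref{eq:normalization}, a point of $R_{k,\ell}$ has the form $(A,B)=(c_{\alpha}I+d_{\alpha}U,\;c_{\beta}I+d_{\beta}V)$ with $U,V\in Q$, where $\alpha=\alpha_k$ and $\beta=\beta_\ell$. Since scalar matrices are central and $d_\alpha,d_\beta\neq0$, we have $AB=BA$ if and only if $UV=VU$. So the set-theoretic intersection $R_0\cap R_{k,\ell}$ corresponds to the commuting pairs $\{(U,V)\in Q\times Q: UV=VU\}$. The plan is to show that this set equals $\Delta_+\cup\Delta_-$. We then check that the two pieces are disjoint and closed, and that each is isomorphic to $Q$.

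For the set-theoretic identification, take commuting $U,V\in Q$. After conjugating in $M$ (Sublemma~\ref{sublem:geometry}) we may assume $U=\diag(1,-1)$. The centralizer of a noncentral diagonal matrix with distinct eigenvalues consists of diagonal matrices, so $V$ is diagonal with $V^2=I$ and trace zero. Hence $V=\pm\diag(1,-1)=\pm U$. Conversely, every pair $(U,\pm U)$ commutes. Thus the intersection is exactly $\Delta_+\cup\Delta_-$.

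For the remaining claims:
\begin{itemize}
\item The sets are disjoint, because $U=-U$ would force $U=0\notin Q$.
\item Each $\Delta_\pm$ is closed in $Q\times Q$, being the zero set of the polynomial equations $V\mp U=0$.
\item The projection $(U,V)\mapsto U$ and the map $U\mapsto(U,\pm U)$ are mutually inverse morphisms, so $\Delta_\pm\cong Q$.
\end{itemize}
Since the problem specifies reduced structures, the reduced intersection is the union of these two closed subvarieties. As both are smooth and irreducible by Sublemma~\ref{sublem:geometry}, the intersection is their disjoint union, which gives $\Delta_+\sqcup\Delta_-$.

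The only mildly delicate point is the centralizer computation together with bookkeeping the normalization. This amounts to verifying that commutation of $A$ and $B$ is unaffected by the affine change of variables, which holds because $d_\alpha,d_\beta\neq0$. Nothing deeper is needed, since we work only with reduced structures.
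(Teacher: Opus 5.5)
Your proof is correct and follows essentially the same route as the paper: you reduce $AB=BA$ to $UV=VU$ using $d_{\alpha_k},d_{\beta_\ell}\neq0$, diagonalize $U$ to force $V=\pm U$, and identify each $\Delta_\pm$ with $Q$ as the graph of $U\mapsto\pm U$. Your closing appeal to smoothness and irreducibility is unnecessary; the disjointness and closedness of $\Delta_\pm$, which you already checked, are what make the union a disjoint union of varieties.
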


\begin{proof}
Write $A=c_{\alpha_k}I+d_{\alpha_k}U$ and
$B=c_{\beta_\ell}I+d_{\beta_\ell}V$. Both coefficients $d_{\alpha_k}$ and $d_{\beta_\ell}$ are nonzero. Thus $AB=BA$ if and only if $UV=VU$. In an eigenbasis of $U$, the matrix $U$ is $\diag(1,-1)$, and every commuting matrix is diagonal. Since $V$ has trace zero and satisfies $V^2=I$, it is either $U$ or $-U$. These possibilities are disjoint, because $U=-U$ is incompatible with $U^2=I$ over $\C$. Each is the graph of a regular map $U\mapsto\pm U$, and projection onto the first coordinate is an isomorphism with $Q$.
\end{proof}

\section{The singular locus and its homology}\label{sec:singular}

\begin{proposition}[The singular locus and its profile]\label{prop:singular}
The singular locus is
\begin{equation}\label{eq:singular}
\Sing R=\bigsqcup_{(k,\ell)\in\cI}(R_0\cap R_{k,\ell})
       \cong\bigsqcup_{j=1}^{\mu}Q.
\end{equation}
Equivalently, its points are exactly the pairs $(A,B)\in R$ satisfying
\[
AB=BA,\qquad A,B\notin\{I,-I\},\qquad A^p=B^q\in\{I,-I\}.
\]
In particular, $\dim\Sing R=2$, $P_2(\Sing R)=(\mu,0,0)$, and $\Sing R$ is smooth. Hence $\Sing(\Sing R)=\varnothing$.
\end{proposition}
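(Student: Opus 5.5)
The plan is to use the standard fact that a point of a reduced variety lying on a single irreducible component is singular exactly when it is a singular point of that component, while a point lying on two or more components is always singular. By Lemma~\ref{lem:components}, every component of $R$ is smooth and distinct $R_{k,\ell}$ are disjoint. So the only points lying on more than one component are those of $R_0\cap R_{k,\ell}$, and this will give \eqref{eq:singular} once both directions are justified.

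For points off the intersections, suppose $(A,B)$ lies on exactly one component $X$. The other components form a closed set not containing the point. Hence a Zariski neighborhood of $(A,B)$ in $R$ coincides with a neighborhood in $X$, so the local ring of $R$ there is that of the smooth variety $X$, and the point is regular.

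For points on an intersection, suppose $(A,B)\in R_0\cap R_{k,\ell}$. The local ring of $R$ there has two minimal primes, of dimensions $3$ and $4$, so it is not a domain. A regular local ring is a domain, so the point is singular. As an alternative check, I would note that the Zariski tangent space contains the tangent spaces of both components, and these are not nested by dimension: $T R_{k,\ell}$ is $4$-dimensional and already has maximal size. So the tangent space would have dimension at least $5>4$ unless $T R_0\subset TR_{k,\ell}$. The ring-theoretic argument avoids this case analysis, so I would use it.

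Next comes the identification and counting. Lemma~\ref{lem:intersections} gives $R_0\cap R_{k,\ell}\cong Q\sqcup Q$, and Lemma~\ref{lem:components} gives $|\cI|=g$. The intersections for different $(k,\ell)$ are disjoint because the $R_{k,\ell}$ are. This yields $2g=\mu$ disjoint closed copies of $Q$, each smooth and irreducible of dimension $2$ by Sublemma~\ref{sublem:geometry}. Hence $\dim\Sing R=2$, $P_2(\Sing R)=(\mu,0,0)$, and $\Sing R$ is smooth, being a finite disjoint union of smooth closed pieces. Therefore $\Sing(\Sing R)=\varnothing$.

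For the pointwise description, I would combine three facts:
\begin{itemize}
\item $R_0$ is the set of commuting pairs;
\item the proof of Sublemma~\ref{sublem:counting} shows that a pair lies in some $R_{k,\ell}$ if and only if $A^p=B^q=\pm I$ with neither $A$ nor $B$ scalar;
\item in $\SL$, being scalar means being $\pm I$.
\end{itemize}

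I expect the main obstacle to be the justification that points on two components are singular in the \emph{reduced} structure. The key is the local ring argument: a regular local ring is a domain, hence has a unique minimal prime. That this ring has two minimal primes follows from both components passing through the point. Neither component contains the other, because their dimensions differ and $(I,I)\notin R_{k,\ell}$.
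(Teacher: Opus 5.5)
Your proposal is correct and follows essentially the same route as the paper. Points lying on a single smooth component are regular because the other components can be deleted to give a neighborhood. Points of $R_0\cap R_{k,\ell}$ are singular because the local ring has two minimal primes while a regular local ring is a domain. Lemma~\ref{lem:intersections} together with $|\cI|=g$ then gives $\mu$ disjoint closed copies of $Q$. Your tangent-space aside is not needed, since it would require the extra check $TR_0\not\subset TR_{k,\ell}$, and you rightly set it aside in favor of the ring-theoretic argument.
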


\begin{proof}
Every component of $R$ is smooth by Lemma~\ref{lem:components}. A point on only one component has a neighborhood in $R$ obtained by deleting all other components. This neighborhood is open in that smooth component, so the point is regular in $R$.

A point on two distinct irreducible components is singular: its local ring has at least two minimal primes, whereas a regular local ring is a domain.\fnsep\footnote{See \cite[Lemma 10.106.2, Tag 00NP]{Stacks}. Over $\C$, regularity and smoothness agree for varieties.} Since distinct four-dimensional components are disjoint, the singular points are precisely the intersections in \eqref{eq:singular}. The matrix characterization follows from their definition and the central-power argument in Lemma~\ref{lem:components}.

By Lemma~\ref{lem:intersections}, each intersection is the disjoint union of two closed copies of $Q$. There are $g$ such intersections. The resulting $2g=\mu$ closed subvarieties are also open in their finite disjoint union, so \eqref{eq:singular} is an isomorphism of reduced varieties. The dimension, profile, smoothness, and emptiness of the next singular locus follow.
\end{proof}

The Andreotti--Frankel--Hamm theorem states that a complex affine variety of complex dimension $n$, possibly singular, has the homotopy type of a CW complex of real dimension at most $n$.\fnsep\footnote{Andreotti and Frankel proved the smooth case \cite{AndreottiFrankel1959}; Hamm treated singular Stein spaces \cite[Satz 1]{Hamm1983}.} Since $\Sing R$ is a closed subvariety of the affine variety $R$, Proposition~\ref{prop:singular} therefore implies
$H_k(\Sing R;\Z)=0$ for $k>2$ and that $H_2(\Sing R;\Z)$ is free abelian. The following computation determines its rank, the lower homology groups, and the homotopy type.

\Needspace{13\baselineskip}
\begin{theorem}[Homotopy type and integral homology]\label{thm:homology}
For relatively prime $p,q>1$,
\begin{equation}\label{eq:homotopy}
\Sing R\simeq\bigsqcup_{j=1}^{\mu}S^2,
\qquad \mu=(p-1)(q-1).
\end{equation}
Consequently,
\begin{equation}\label{eq:homology}
H_k(\Sing R;\Z)\cong
\begin{cases}
\Z^\mu,& k=0,2,\\
0,& k\ge0,\ k\ne0,2.
\end{cases}
\end{equation}
\end{theorem}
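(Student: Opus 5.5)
By Proposition~\ref{prop:singular}, $\Sing R$ is isomorphic to a disjoint union of $\mu$ copies of $Q$. Each copy is closed, hence also open in the usual topology. So the homotopy type of $\Sing R$ is the disjoint union of $\mu$ copies of the homotopy type of $Q$, and the homology is the direct sum. It therefore suffices to show $Q\simeq S^2$.

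To show $Q\simeq S^2$, work with the model $Q=\{U\in\mathfrak{sl}_2(\C):U^2=I\}$, or equivalently the quadric $\{(a,b,c)\in\C^3:a^2+bc=1\}$. Substitute $b=y+iz$, $c=y-iz$, $a=x$; this turns the equation into $x^2+y^2+z^2=1$ in $\C^3$. Write a point as $w=u+iv$ with $u,v\in\RR^3$. The equation becomes
\[
|u|^2-|v|^2=1,\qquad u\cdot v=0.
\]
The map $(u,v)\mapsto\bigl(u/|u|,\,v\bigr)$ is then a diffeomorphism from $Q$ onto
\[
TS^2=\{(s,v)\in S^2\times\RR^3:s\cdot v=0\}.
\]
Its inverse is $(s,v)\mapsto\bigl(\sqrt{1+|v|^2}\,s,\,v\bigr)$; note $|u|=\sqrt{1+|v|^2}\ge1$, so $u/|u|$ is well defined. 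The straight-line homotopy $v\mapsto tv$ deformation retracts $TS^2$ onto its zero section. This gives $Q\simeq S^2$ and proves \eqref{eq:homotopy}.

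Alternatively, since $Q\cong\SL/D$, one could use the fibration $D\to\SL\to Q$ and retract to $SU(2)/U(1)\cong S^2$. The explicit real coordinates above are more elementary.

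Finally, \eqref{eq:homology} follows from \eqref{eq:homotopy}. Homology takes finite disjoint unions to direct sums, and $H_*(S^2;\Z)$ is $\Z$ in degrees $0$ and $2$ and zero otherwise. This agrees with the Andreotti--Frankel--Hamm vanishing noted before the theorem.

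The only step needing care is checking that the map to $TS^2$ and its inverse are smooth and mutually inverse. This comes down to the identity $|u|^2=1+|v|^2$ and the fact that $|u|\ge1$, so none of the denominators vanish.
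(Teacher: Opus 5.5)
Your proposal is correct and follows essentially the same route as the paper: the same linear change of coordinates to $x^2+y^2+z^2=1$, the same explicit diffeomorphism $Q\to TS^2$ with the same inverse (your $u/|u|$ coincides on $Q$ with the paper's $u/\sqrt{1+\|v\|^2}$), the same fiberwise contraction to the zero section, and then additivity of homology over the $\mu$ disjoint components.
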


\Needspace{9\baselineskip}
\begin{proof}
We give an explicit diffeomorphism from $Q$ to $TS^2$. The linear change of coordinates
\[
z_1=a,\qquad z_2=\frac{b+c}{2},\qquad z_3=\frac{b-c}{2i}
\]
identifies $a^2+bc=1$ with $z_1^2+z_2^2+z_3^2=1$. Write $z=u+iv$, where $u,v\in\RR^3$. This equation becomes
\[
\|u\|^2-\|v\|^2=1,\qquad u\cdot v=0.
\]
Identify $TS^2$ with
$\{(n,v)\in\RR^3\times\RR^3:\|n\|=1,\ n\cdot v=0\}$.
Then
\[
u+iv\longmapsto\left(\frac{u}{\sqrt{1+\|v\|^2}},v\right),
\qquad
(n,v)\longmapsto\sqrt{1+\|v\|^2}\,n+iv
\]
are mutually inverse smooth maps. Contracting each tangent fiber by
$(n,v)\mapsto(n,(1-t)v)$, $0\le t\le1$, gives a strong deformation retraction onto the zero section. Hence $Q\simeq S^2$.

Apply this equivalence separately to the $\mu$ components in \eqref{eq:singular} to obtain \eqref{eq:homotopy}. The singular chain complex of a finite disjoint union is the direct sum of the chain complexes of its components. Homotopy invariance and $H_k(S^2;\Z)\cong\Z$ for $k=0,2$, with zero groups in other degrees, give \eqref{eq:homology}.
\end{proof}

\section{The Milnor fiber and the trefoil}\label{sec:milnor}

Let $f(z,w)=z^p+w^q$. Its link at the origin is the corresponding torus knot in a small three-sphere. The Milnor fibration
\[
\frac{f}{|f|}:S^3_\varepsilon\setminus
             (f^{-1}(0)\cap S^3_\varepsilon)\longrightarrow S^1
\]
has fiber the interior of a compact connected orientable surface $F$ with one boundary component. Equivalently, a compact Milnor fiber is
$f^{-1}(\delta)\cap\overline B^4_\varepsilon$, for suitably small
$0<|\delta|\ll\varepsilon$.\fnsep\footnote{See \cite{Milnor1968}, Chapters 4--5. The interior and compact fiber have the same homotopy type.}

\begin{corollary}[Comparison with the Milnor fiber]\label{cor:milnor}
If $F$ is the Milnor fiber of $z^p+w^q$ at the origin, then
\[
b_0(\Sing R)=b_2(\Sing R)=b_1(F)=\mu=2g.
\]
Thus each nonzero Betti number of $\Sing R$ equals twice the genus of the corresponding torus knot.
\end{corollary}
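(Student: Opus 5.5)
The equalities $b_0(\Sing R)=b_2(\Sing R)=\mu$ come straight from Theorem~\ref{thm:homology}. Equation~\eqref{eq:homology} gives $H_0(\Sing R;\Z)\cong H_2(\Sing R;\Z)\cong\Z^\mu$, and each Betti number is the rank of the corresponding free abelian group. The relation $\mu=2g$ is the definition of $g$ from the introduction. It remains to show that $b_1(F)=\mu$, that is, to compute the first Betti number of the Milnor fiber of $f=z^p+w^q$.

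For this I will cite Milnor's theorem \cite[Theorem 6.5]{Milnor1968}, already invoked in the introduction: $F$ has the homotopy type of a wedge of $\mu(f)$ circles. Here $\mu(f)=\dim_\C \C\{z,w\}/(\partial_z f,\partial_w f)$ is the Milnor number. Since $\partial_z f=pz^{p-1}$ and $\partial_w f=qw^{q-1}$, the Jacobian algebra is $\C\{z,w\}/(z^{p-1},w^{q-1})$. This algebra has monomial basis $z^iw^j$ with $0\le i\le p-2$ and $0\le j\le q-2$, so $\mu(f)=(p-1)(q-1)=\mu$. A wedge of $\mu$ circles has $H_1\cong\Z^\mu$, so $b_1(F)=\mu$.

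As an independent check, and to justify the final sentence of the statement, I will argue via the surface itself. $F$ is a compact connected orientable surface with one boundary component, as recalled before the statement. Its genus is therefore $b_1(F)/2$. The fiber surface of a fibered knot is a minimal-genus Seifert surface. The genus of the torus knot $T(p,q)$ is $(p-1)(q-1)/2=g$. Hence $b_1(F)=2g=\mu$, matching the count $|\cI|=g$ from Lemma~\ref{lem:components}. I expect no real obstacle. The only point needing care is that the Milnor number of a Brieskorn–Pham polynomial is the product $(p-1)(q-1)$, which is the standard monomial-basis computation above.
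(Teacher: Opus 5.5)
Your proposal is correct and follows the paper's proof essentially step for step. You obtain $b_0=b_2=\mu$ from Theorem~\ref{thm:homology}, and $b_1(F)=\mu(f)=(p-1)(q-1)$ from Milnor's bouquet theorem together with the monomial basis of $\C\{z,w\}/(z^{p-1},w^{q-1})$; your extra genus check, via the fiber surface as a minimal-genus Seifert surface, is valid but unnecessary, since the paper gets the final sentence directly from $\mu=2g$ (and, as the paper notes, matching the bouquet count with the Jacobian-algebra dimension uses Milnor's Theorem~7.2 and Appendix~B, not Theorem~6.5 alone).
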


\begin{proof}
The Milnor number is the dimension of the local Jacobian algebra.\fnsep\footnote{The ring $\C\{z,w\}$ consists of power series $\sum_{i,j\ge0}a_{ij}z^iw^j$, with complex coefficients, that converge in some neighborhood of $(0,0)$. Equivalently, it is the ring of germs of holomorphic functions at the origin: two functions represent the same germ if they agree on some neighborhood of the origin.} Thus
\[
\mu(f)=\dim_\C\frac{\C\{z,w\}}{(\partial f/\partial z,\partial f/\partial w)}
      =\dim_\C\frac{\C\{z,w\}}{(z^{p-1},w^{q-1})}
      =(p-1)(q-1).
\]
The residue classes of $z^iw^j$, $0\le i\le p-2$ and $0\le j\le q-2$, form a basis of the quotient. Milnor's theorem gives $F\simeq\bigvee_{\mu(f)}S^1$, and hence $b_1(F)=\mu(f)$.\fnsep\footnote{See \cite[Theorems 6.5 and 7.2]{Milnor1968} for the bouquet theorem and the identification of the middle Betti number with the multiplicity; see Appendix B, Problem 3, for the local-algebra interpretation.} Combining this with Theorem~\ref{thm:homology} and $\mu=2g$ proves the assertion.
\end{proof}

\paragraph{The trefoil.}
Take $p=2$ and $q=3$. There is one four-dimensional component,
\[
R_{1,1}=\{A\in M:\tr A=0\}\times\{B\in M:\tr B=1\}.
\]
Since $A^2=-I$, its two intersections with $R_0$ are the graphs
\[
B=\frac12 I+\frac{\sqrt3}{2}A,\qquad
B=\frac12 I-\frac{\sqrt3}{2}A
\quad(\tr A=0).
\]
Each graph is isomorphic to $Q$, so
\[
\Sing R\cong Q\sqcup Q\simeq S^2\sqcup S^2.
\]
The first relation is an algebraic isomorphism and the second is a homotopy equivalence. Each component is a smooth complex surface, hence a real four-dimensional manifold, that deformation retracts onto a two-sphere.
In this case $P_4(R)=(1,1,0,0,0)$, $P_2(\Sing R)=(2,0,0)$, and
$H_0(\Sing R;\Z)\cong H_2(\Sing R;\Z)\cong\Z^2$.
The Milnor fiber is a once-punctured torus, with first Betti number two. Figure~\ref{fig:trefoil} compares the two homotopy models.

\begin{figure}[htbp]
\centering
\begin{minipage}[t]{0.46\textwidth}
\centering
\small\textbf{Singular locus}\par\smallskip
\begin{tikzpicture}[line width=0.65pt]
  \path[use as bounding box] (-1.8,-0.75) rectangle (1.8,0.75);
  \foreach \x in {-0.9,0.9} {
    \draw (\x,0) circle[radius=0.62];
    \draw[densely dashed] (\x-0.62,0)
      arc[start angle=180,end angle=0,x radius=0.62cm,y radius=0.20cm];
    \draw (\x-0.62,0)
      arc[start angle=180,end angle=360,x radius=0.62cm,y radius=0.20cm];
  }
\end{tikzpicture}\par
$\Sing R\simeq S^2\sqcup S^2$\par
$b_0=b_2=2$
\end{minipage}\hfill
\begin{minipage}[t]{0.46\textwidth}
\centering
\small\textbf{Milnor fiber}\par\smallskip
\begin{tikzpicture}[line width=0.65pt]
  \path[use as bounding box] (-1.8,-0.75) rectangle (1.8,0.75);
  \draw (-0.62,0) circle[radius=0.62];
  \draw (0.62,0) circle[radius=0.62];
  \fill (0,0) circle[radius=1.3pt];
\end{tikzpicture}\par
$F\simeq S^1\vee S^1$\par
$b_1=2$
\end{minipage}
\caption{Homotopy models for the trefoil $(p,q)=(2,3)$, illustrating how the same number $\mu=2$ occurs in different homological degrees.}
\label{fig:trefoil}
\end{figure}

\begin{remark}
If $p=1$ or $q=1$, then $T(q,p)\cong\Z$ and $R\cong\SL$ is smooth. Its singular locus is empty and all ordinary homology groups of that locus vanish. The assertion $\dim\Sing R=2$ therefore requires $p,q>1$.
\end{remark}

\bigskip
\begingroup
\small
\raggedright

\endgroup

\medskip
\noindent\small
S. Liriano\\
Foro Cultural 81\\
Centro Hist\'orico\\
Guanajuato, M\'exico 36000\\
Email: \href{mailto:SAL21458@yahoo.com}{SAL21458@yahoo.com}

\end{document}